\def\BState{\State\hskip-\ALG@thistlm}
\begin{document}

\lmnsVolume{43}{1}{2023}{1}{}

\lmnsTitle[Boundary Regional Controllability of Semilinear Systems]{%
Boundary Regional Controllability of Semilinear Systems Involving Caputo Time Fractional Derivatives}

\lmnsAuthors[A. Tajani, F.-Z. El Alaoui and D. F. M. Torres]{%
Asmae Tajani, Fatima-Zahrae El Alaoui and Delfim F. M. Torres}

\lmnsAbstract{We study boundary regional controllability problems 
for a class of semilinear fractional systems. Sufficient conditions 
for regional boundary controllability are proved by assuming that 
the associated linear system is approximately regionally boundary controllable. 
The main result is obtained by using fractional powers of an operator and 
the fixed point technique under the approximate controllability of  
the corresponding linear system in a suitable subregion of the space domain. 
An algorithm is also proposed and some numerical simulations performed 
to illustrate the effectiveness of the obtained theoretical results.}

\lmnsKeywords{Fractional derivatives and integrals, 
Nonlinear systems, 
Regional controllability, 
Semigroup operators, 
Fixed-point theorems.}

\lmnsMSC{26A33, 93B05, 93C10}


\lmnsContact{Asmae Tajani}{
Center for Research and Development in Mathematics and Applications (CIDMA), 
Department of Mathematics, University of Aveiro, 
3810-193 Aveiro, Portugal\\
and\\
TSI Team, Department of Mathematics, Faculty of Sciences,\\ 
Moulay Ismail University, 11201 Meknes, Morocco}{tajaniasmae@ua.pt}

\lmnsContact{Fatima-Zahrae El Alaoui}{
TSI Team, Department of Mathematics, Faculty of Sciences,\\ 
Moulay Ismail University, 11201 Meknes, Morocco}{f.elalaoui@umi.ac.ma}

\lmnsContact{Delfim F. M. Torres}{
Center for Research and Development in Mathematics and Applications (CIDMA), 
Department of Mathematics, University of Aveiro, 3810-193 Aveiro, Portugal}{delfim@ua.pt}

\lmnsMaketitle


\begin{center}
To Professor Andrea Bacciotti (in memoriam)
\end{center}


\section{Introduction}
\label{sec:introduction}

Lately, fractional diffusion equations have garnered increasing 
attention and established themselves as invaluable tools for 
modeling various phenomena, particularly in the realms of physics, 
chemistry, engineering, medicine, and biology \cite{11,miller,app}. 
One of the most successful applications of fractional calculus manifest 
in diffusion models that elucidate the behavior of a diffusing particle, 
exhibiting mean square displacement rates either slower or faster than 
those observed in typical diffusion processes \cite{MR3904404,MyID:386,MyID:441}. 
Indeed, anomalous diffusion is prevalent in numerous experiments, 
highlighting the superior capability 
of fractional equations to characterize intricate phenomena 
\cite{anamalous,diff}. Fractional systems can also be found in the field 
of electrochemistry, where they are employed to determine the concentration 
of analyzed electroactive species near the electrode surface with greater precision. 
This task is accomplished through the application of a fractional diffusion model 
\cite{sbati,hilfer}. Owing to its extensive range of applications, the representation 
formula for the mild solutions of fractional sub-diffusion equations has undergone 
a comprehensive study: see, e.g., \cite{zhouex,borai,theor,solution,nonlinear}
and references therein.

It is widely recognized that the mathematical field of control theory 
encompasses various concepts, one of which is the notion of controllability
\cite{MR3337805,MR3684684,MR2784697}. The concept of controllability 
was initially introduced by Kalman in 1960 
and involves guiding a system toward a desired state through the use 
of control techniques \cite{kalman}. The issue of controllability, 
for both linear and nonlinear systems, for dynamic equations 
in both finite and infinite-dimensional spaces, have been extensively investigated 
\cite{cinte,cinter2,cinter3,cont1,cont2,zhoucon}. Nonetheless, in a significant 
number of practical applications, our primary concern lies in scenarios where 
the desired state of the problem at hand is confined to an internal or boundary 
subregion within the overall spatial domain. In such cases, the concept of regional 
controllability becomes crucial \cite{kamal,55,bout1,bout2}.

An increasing number of researchers have directed their attention toward the 
study of regional controllability in linear time-fractional systems 
\cite{regbund,grcont,regana}. For nonlinear fractional systems within 
infinite-dimensional spaces, pioneering results have also been achieved 
\cite{nonl1,nonl2,major,cont,me}.

The main objective of this work is to study the regional boundary 
controllability of the following class of distributed abstract fractional 
semilinear control systems involving a Caputo fractional 
order $\alpha\in(0,1]$: 
\begin{equation}
\label{semili}
\left\{\begin{array}{lll}
{^C}D_{0^+}^\alpha y(x,t) + Ay(x,t) 
= \mathcal{F} y(x,t)+ \mathbf{B}u(t)  
&\text{in} & \Omega\times]0,T], \\ \\
\dfrac{\partial y(\xi,t)}{\partial \nu_A}=0 
& \mbox{on} & \partial\Omega\times ]0,T] ,\\ \\
y(x,0)=y_{0}(x)   \hskip2cm & \text{in}  & \Omega,
\end{array} \right.
\end{equation}
with the space domain $\Omega$  being a subset of $\mathbb{R}^n$, 
$n\leq 3$. Here ${^C}D^\alpha_{0^+}$ is the Caputo fractional derivative 
of order $\alpha$; $-A$ is the infinitesimal generator of an analytic 
semigroup $(\mathcal{T}(t))_{t\geq 0}$ on the Hilbert space 
$X:=H^1(\Omega)$; $\mathcal{F}$ is a nonlinear operator; 
the control function $u$ takes values in $U=L^p([0,T],\mathcal{U})$, 
where $p\geq 2$ and $\mathcal{U}$ is a Banach space; and $\mathbf{B}$ 
is a linear control operator from $U$ into $L^p([0,T],X)$.

Semilinear systems, described by fractional equations, 
present an array of challenges and opportunities in the context 
of regional controllability. Our work is dedicated to establishing 
sufficient conditions for regional boundary controllability of the 
Caputo system \eqref{semili}, particularly when the operator $-A$ 
generates an analytic semigroup. This endeavor leverages the tools 
of fractional calculus and the Picard fixed-point theorem.

The paper is arranged as follows. Here, we have motivated 
and introduced problem \eqref{semili} under investigation. In 
Section~\ref{sec2}, we present some preliminaries that 
will be useful throughout the manuscript. Section~\ref{sec3} 
is devoted to study the boundary controllability on a subregion 
of the boundary of the evolution domain of the system: 
under some assumptions and by using the analytic approach,
we prove results on the regional controllability 
by using the relation between internal and boundary controllability concepts 
(Theorems~\ref{my:thm1} and \ref{my:thm2}).
We proceed with Section~\ref{sec4}, presenting some numerical simulations 
that illustrate the effectiveness of the proposed methods.
Finally, we end with Section~\ref{sec5} of conclusion, 
pointing also some directions of possible future research.


\section{Preliminaries}
\label{sec2}

In this section, we recall basic definitions of fractional 
operators, the notions of regional controllability, 
as well as some lemmas that will be useful afterwards.

\begin{definition}[See \cite{11}]
The Riemann--Liouville fractional integral of order $\alpha\in(0,1]$
with the lower limit $0$, of a function $g$ at point $t>0$, is defined by
\begin{equation*}
\mathcal{I}_{0^+}^\alpha g(t)
= \dfrac{1}{\Gamma(\alpha)}\displaystyle\int_{0}^{t}(t-\zeta)^{\alpha-1}
g(\zeta)d\zeta, \quad \forall  t>0,
\end{equation*}
where $\Gamma$ is the gamma function.
\end{definition}

\begin{definition}[See \cite{11}]
The Caputo fractional derivative of order  $\alpha\in(0,1]$
with the lower limit $0$, of a function $g$ at point $t>0$, is given by
\begin{equation}
\label{cafr}
^{C}\mathrm{D}_{0^+}^{\alpha}g(t)
= \dfrac{1}{\Gamma(1-\alpha)}\displaystyle\int_{0}^{t}(t-\zeta)^{-\alpha}
\displaystyle\frac{d}{d\zeta}(g(\zeta))ds=\mathcal{I}_{0^+}^\alpha g'(t). 
\end{equation}
\end{definition}

Throughout the paper, we denote by $\rho(A)$ the resolvent set of $A$, 
assuming that $0 \in \rho(A)$. Then, for $0<q< 1$, the fractional 
power of $A$ of order $q$ is well defined, being linear and closed on its domain 
$X_q=D(A^q)$. For details, see \cite{pazy}.

Next, we present the definition of mild solution of system \eqref{semili}.

\begin{definition}[See \cite{borai}] 
Let $t\in[0,T]$ and $u\in U$. The mild solution of \eqref{semili}, 
denoted by $y_u(\cdot)$, is a continuous function from $[0,T]$ to $X$   
defined by the following expression: 
\begin{equation}
\label{sys1.sol}
\begin{split}
y_u(t) &= \mathcal{H}_\alpha(t)y_0 
+ \displaystyle\int_{0}^{t}(t-\zeta)^{\alpha-1}
\mathcal{K}_\alpha(t-\zeta)\mathcal{F} y(\zeta,u)d\zeta\\
&\quad + \displaystyle\int_{0}^{t}(t-\zeta)^{\alpha-1}
\mathcal{K}_\alpha(t-\zeta)\mathbf{B}u(\zeta)d\zeta,
\end{split} 
\end{equation}
where 
$$
\mathcal{H}_\alpha(t) = \displaystyle\int_{0}^{\infty}
\mathrm{\varphi}_\alpha(\theta)\mathcal{T}(t^\alpha\theta)d\theta, 
$$
$$
\mathcal{K}_\alpha(t) = \alpha\displaystyle\int_{0}^{\infty}
\theta\mathrm{\varphi}_\alpha(\theta)\mathcal{T}(t^\alpha\theta)d\theta, 
$$
and $ \mathrm{\varphi}_\alpha $ is a probability density function.
\end{definition}

The above operators satisfy some well-known properties.
The following lemma is useful for our purposes.

\begin{lemma}[See \cite{majorit,mea}]
\label{lemma2.4}   
For any $\alpha,q\in ]0,1]$, the following properties hold:
\begin{itemize}
\item [(i)]  The mapping $||\cdot||_{X^q}=||A^q(\cdot)||_X$ 
defines a norm in $X^q$; $(X^q,||\cdot||_{X^q} )$ is a Banach space;
and $\overline{X^q}=X$;

\item [(ii)] For every $t>0$, there exists $C_q,M_{\alpha q}>0$ such that
$$
||\mathcal{H}_\alpha(t)||_{\mathcal{L}(X,X^q)}
\leq C_{q}t^{-\alpha q},
$$
$$
||\mathcal{K}_\alpha(t)||_{\mathcal{L}(X,X^q)}
\leq \dfrac{M_{\alpha q}}{t^{\alpha q}};
$$

\item[(iii)] If $\mathcal{E}_\alpha(t)=t^{\alpha-1}\mathcal{K}_\alpha(t)$, then 
$$
\mathcal{E}_\alpha\in L^1([0,T],\mathcal{L}(X,X^q)).
$$
\end{itemize}
\end{lemma}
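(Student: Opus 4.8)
The plan is to treat the three items separately, deriving (i) from the abstract theory of fractional powers of sectorial operators, and (ii)--(iii) from the analyticity of the semigroup $(\mathcal{T}(t))_{t\geq 0}$ together with the moment properties of the density $\varphi_\alpha$. Throughout I would use that, since $0\in\rho(A)$ and $-A$ generates an analytic semigroup, $A^q$ is a closed, densely defined, injective linear operator with $D(A^q)=X^q$.

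For item (i), injectivity of $A^q$ immediately gives that $\|x\|_{X^q}:=\|A^qx\|_X$ separates points, while linearity of $A^q$ supplies the remaining norm axioms, so $\|\cdot\|_{X^q}$ is a norm. Completeness follows from closedness: if $(x_n)$ is Cauchy for $\|\cdot\|_{X^q}$, then both $(x_n)$ and $(A^qx_n)$ are Cauchy in $X$, hence convergent to some $x$ and $y$; closedness of $A^q$ then forces $x\in D(A^q)$ and $A^qx=y$, whence $x_n\to x$ in $X^q$. Finally $\overline{X^q}=X$, because $D(A)\subseteq D(A^q)$ for $q\leq 1$ and the generator of a strongly continuous semigroup has dense domain.

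For items (ii)--(iii), I would insert $A^q$ under the integral signs in the definitions of $\mathcal{H}_\alpha$ and $\mathcal{K}_\alpha$ and invoke the standard analytic-semigroup bound $\|A^q\mathcal{T}(s)\|_{\mathcal{L}(X)}\leq C_q\,s^{-q}$ for $s>0$. With $s=t^\alpha\theta$ this yields, for $\mathcal{H}_\alpha$,
\begin{equation*}
\|A^q\mathcal{H}_\alpha(t)x\|_X
\leq C_q\,t^{-\alpha q}\Bigl(\int_0^\infty \theta^{-q}\varphi_\alpha(\theta)\,d\theta\Bigr)\|x\|_X,
\end{equation*}
and an analogous estimate for $\mathcal{K}_\alpha$ carrying the extra factor $\theta$, i.e. with $\int_0^\infty \theta^{1-q}\varphi_\alpha(\theta)\,d\theta$. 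The finiteness of these integrals is governed by the moment identity $\int_0^\infty\theta^{\nu}\varphi_\alpha(\theta)\,d\theta=\Gamma(\nu+1)/\Gamma(\alpha\nu+1)$, valid for $\nu>-1$; absorbing the constants into $C_q$ and $M_{\alpha q}$ gives the claimed bounds. Item (iii) is then immediate from the $\mathcal{K}_\alpha$ estimate, since $\|\mathcal{E}_\alpha(t)\|_{\mathcal{L}(X,X^q)}\leq M_{\alpha q}\,t^{\alpha(1-q)-1}$, and $t\mapsto t^{\alpha(1-q)-1}$ is integrable on $[0,T]$ precisely when $\alpha(1-q)>0$.

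I expect the main obstacle to lie in the careful justification of the moment integrals rather than in the operator-theoretic bookkeeping. The $\mathcal{H}_\alpha$ estimate requires $\int_0^\infty\theta^{-q}\varphi_\alpha(\theta)\,d\theta<\infty$, i.e. $\nu=-q>-1$, and the integrability in (iii) requires $\alpha(1-q)-1>-1$; both degenerate at the endpoint $q=1$, which must therefore be treated with care. One must also verify that estimating under the integral sign is legitimate, which rests on the local integrability of $\theta\mapsto\theta^{-q}\varphi_\alpha(\theta)$ near $\theta=0$, guaranteed by the asymptotics of the Mainardi density $\varphi_\alpha$.
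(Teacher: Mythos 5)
The paper does not prove this lemma at all: it is imported verbatim from \cite{majorit,mea}, so there is no in-paper argument to compare against. Your reconstruction is precisely the proof in those sources: item (i) by injectivity and closedness of $A^q$ plus density of $D(A)$, and items (ii)--(iii) by commuting $A^q$ with the subordination integrals, applying the analytic-semigroup estimate $\|A^q\mathcal{T}(s)\|\leq C_q s^{-q}$, and evaluating the Wright/Mainardi moments $\int_0^\infty\theta^{\nu}\varphi_\alpha(\theta)\,d\theta=\Gamma(\nu+1)/\Gamma(\alpha\nu+1)$, $\nu>-1$ (giving $M_{\alpha q}=\alpha C_q\Gamma(2-q)/\Gamma(1+\alpha(1-q))$). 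Your endpoint remark is correct and sharper than the lemma as printed: at $q=1$ the $\mathcal{K}_\alpha$ bound in (ii) actually survives, since the extra factor $\theta$ makes the relevant moment $\int_0^\infty\theta^{1-q}\varphi_\alpha(\theta)\,d\theta$ equal to $1$, but the $\mathcal{H}_\alpha$ bound fails (the moment with $\nu=-1$ diverges, as $\varphi_\alpha(0)=1/\Gamma(1-\alpha)\neq 0$ gives a logarithmic divergence) and (iii) fails outright, since the majorant becomes $t^{-1}\notin L^1([0,T])$. So the range ``$\alpha,q\in\left]0,1\right]$'' in the statement is an overstatement relative to both the cited sources and the paper's own standing assumption $0<q<1$ made just before the lemma; your proof is complete and correct on $0<q<1$, which is the range actually used.
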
 

Throughout the text,
$A_1:=||\mathcal{E}_\alpha||_{L^1([0,T],\mathcal{L}(X,X^q))}$
with $\mathcal{E}_\alpha(t)=t^{\alpha-1}\mathcal{K}_\alpha(t)$.

Let $\omega$ be a subset of $ \Omega$. The restriction operator 
in $\omega$ is defined as follows:
$$
\begin{array}{llll}
\chi_{_{\omega}}:& H^1(\Omega) &\longrightarrow & H^1(\omega)\\
&y & \longmapsto &  y_{|_{\omega}}.		
\end{array}$$

We now introduce the notion of regional controllability.

\begin{definition}
For any element $d_s$ of $H^1(\omega)$, if there exists a control 
$u\in U$ such that $\chi_\omega y_u(T)=d_s$, then we say that
system \eqref{semili} is exactly $\omega$-controllable (i.e.,  
regionally exactly controllable in $\omega$).
On the other hand, if for all $d_s$ in $H^1(\omega)$ and 
for all $\epsilon>0$ there exists a control $u\in U$ such that 
$||\chi{_{\omega}} y_u(T)-d_s||_{H^1(\omega)}\leq \epsilon$, 
then we say that system \eqref{semili} is approximately $\omega$-controllable 
(i.e., regionally approximately controllable in $\omega$).
\end{definition}

Consider the trace operator $\gamma_0$ from $H^1(\Omega)$  
to $H^{\frac{1}{2}}(\partial\Omega)$, which is a continuous 
linear onto operator. Let us denote by $\gamma_0^*$  it's adjoint. 
For $\mathrm{\varGamma}$ a subset of $\partial\Omega$, 
we define the restriction operator $\chi_{_{\mathrm{\varGamma}}}$ by
$$
\begin{array}{rlll}
\chi_{_{\mathrm{\varGamma}}} :& H^{\frac{1}{2}}(\partial\Omega) 
&\longrightarrow & H^{\frac{1}{2}}(\mathrm{\varGamma})\\
&y & \longmapsto &  y_{|_{\mathrm{\varGamma}}},		
\end{array}
$$
and we denote by $\chi^*_{_{\mathrm{\varGamma}}}$ it's adjoint.

Next, we give the definition of regional boundary 
controllability for system \eqref{semili}.

\begin{definition}
We say that system \eqref{semili} is exactly (resp. approximately)  
boundary regionally controllable on $\mathrm{\Gamma}$ 
($\mathcal{B}$-controllable on $\mathrm{\Gamma}$) if 
$$
\forall z_d\in H^{\frac{1}{2}}(\mathrm{\varGamma}) 
\ \exists u\in U \:\: \mbox{ such that } \: \quad 
\chi_{_{\mathrm{\varGamma}}}(\gamma_0 y_u(T))=z_d
$$ 
$$ 
\left(\mbox{resp.} \qquad \forall z_d
\in H^{\frac{1}{2}}(\mathrm{\varGamma}) 
\ \exists \varepsilon>0 \  \exists u\in U \ \  
||\chi_{_{\mathrm{\varGamma}}}(\gamma_0 y_u(T))
-z_d||_{H^{\frac{1}{2}}(\mathrm{\varGamma})} 
\leq \varepsilon\right). 
$$
\end{definition}


\section{Regional $\mathcal{B}$-Controllability}
\label{sec3}

In this section, we study the possibility of finding a control 
function that steers system \eqref{semili} to a desired state
$z_d$ on  $\mathrm{\varGamma}$.
  
Let us define the operator $H_{\mathrm{\varGamma}}^\alpha$ 
from $U$ into $H^{\frac{1}{2}}(\mathrm{\varGamma})$  by
$$
H_{\mathrm{\varGamma}}^\alpha u=\chi_{_{\mathrm{\varGamma}}}
\gamma_0 (\mathcal{E}_\alpha\ast \mathbf{B}u), \quad \forall u\in U,
$$
where $\ast$ is the convolution operation.

In the next result, we suppose that the associate linear system 
to \eqref{semili} (i.e., system \eqref{semili} with $\mathcal{F} \equiv 0$) 
is approximately $\mathcal{B}$-controllable on $\mathrm{\varGamma}$.
Theorem~\ref{my:thm1} presents a direct result concerning the 
boundary controllability of system \eqref{semili} on $\mathrm{\varGamma}$.

\begin{theorem}  
\label{my:thm1}
If the subset  $\mbox{Im}\, H_{ \mathrm{\varGamma}}^\alpha$ 
is closed and contains the element 
$$
z_d- \chi_{_{\mathrm{\varGamma}}}\gamma_0 \mathcal{H}_\alpha(T)y_0
- \chi_{_{\mathrm{\varGamma}}}\gamma_0 (\mathcal{E}_\alpha\ast 
\mathcal{F} y_{\tilde{u}})(T),
$$  
then the exact $\mathcal{B}$-controllability on $\mathrm{\Gamma}$ 
of system \eqref{semili} at time $T$ is obtained 
by means of the control function
$$
\tilde{u}(t)=\mathrm{H}_{\mathrm{\varGamma}}^{\alpha^\dag}[z_d
- \chi_{_{\mathrm{\varGamma}}}\gamma_0 \mathcal{H}_\alpha(t)
y_0-\chi_{_{\mathrm{\varGamma}}}\gamma_0 (\mathcal{E}_\alpha\ast 
\mathcal{F} y_{\tilde{u}})(t)],
$$
where $H_{\mathrm{\varGamma}}^{\alpha^\dag}$ represents the pseudo 
inverse operator defined by 
$H_{\mathrm{\varGamma}}^{\alpha^\dag}:=
H_{\mathrm{\varGamma}}^{\alpha^*} \left(H_{\mathrm{\varGamma}}^\alpha 
H_{\mathrm{\varGamma}}^{\alpha^*}\right)^{-1}$.
\end{theorem}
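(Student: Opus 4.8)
The plan is to reduce the boundary requirement $\chi_{_{\mathrm{\varGamma}}}(\gamma_0 y_{\tilde u}(T))=z_d$ to a single linear equation $H_{\mathrm{\varGamma}}^\alpha \tilde u = w$ for an appropriate effective target $w$, and then to produce $\tilde u$ through the pseudo inverse. First I would rewrite the mild solution \eqref{sys1.sol} on the subregion: using $\mathcal{E}_\alpha(t)=t^{\alpha-1}\mathcal{K}_\alpha(t)$ and the convolution notation, applying $\chi_{_{\mathrm{\varGamma}}}\gamma_0$ and evaluating at $t=T$ gives
\begin{equation*}
\chi_{_{\mathrm{\varGamma}}}\gamma_0 y_{\tilde u}(T)
= \chi_{_{\mathrm{\varGamma}}}\gamma_0 \mathcal{H}_\alpha(T)y_0
+ \chi_{_{\mathrm{\varGamma}}}\gamma_0 (\mathcal{E}_\alpha\ast \mathcal{F} y_{\tilde u})(T)
+ H_{\mathrm{\varGamma}}^\alpha \tilde u,
\end{equation*}
since the last convolution term is exactly $H_{\mathrm{\varGamma}}^\alpha \tilde u$ by the definition of $H_{\mathrm{\varGamma}}^\alpha$. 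Hence $\chi_{_{\mathrm{\varGamma}}}(\gamma_0 y_{\tilde u}(T))=z_d$ is equivalent to $H_{\mathrm{\varGamma}}^\alpha \tilde u = w$, where $w := z_d - \chi_{_{\mathrm{\varGamma}}}\gamma_0 \mathcal{H}_\alpha(T)y_0 - \chi_{_{\mathrm{\varGamma}}}\gamma_0 (\mathcal{E}_\alpha\ast \mathcal{F} y_{\tilde u})(T)$ is precisely the element assumed to belong to $\mbox{Im}\,H_{\mathrm{\varGamma}}^\alpha$.

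The second step is to justify that the pseudo inverse $H_{\mathrm{\varGamma}}^{\alpha^\dag}=H_{\mathrm{\varGamma}}^{\alpha^*}(H_{\mathrm{\varGamma}}^\alpha H_{\mathrm{\varGamma}}^{\alpha^*})^{-1}$ is a bounded right inverse of $H_{\mathrm{\varGamma}}^\alpha$ on its image. Working in the Hilbert space $H^{\frac{1}{2}}(\mathrm{\varGamma})$, the operator $H_{\mathrm{\varGamma}}^\alpha H_{\mathrm{\varGamma}}^{\alpha^*}$ is bounded, self-adjoint and non-negative, with $\ker (H_{\mathrm{\varGamma}}^\alpha H_{\mathrm{\varGamma}}^{\alpha^*})=\ker H_{\mathrm{\varGamma}}^{\alpha^*}=(\mbox{Im}\,H_{\mathrm{\varGamma}}^\alpha)^\perp$ and $\mbox{Im}\,(H_{\mathrm{\varGamma}}^\alpha H_{\mathrm{\varGamma}}^{\alpha^*})=\mbox{Im}\,H_{\mathrm{\varGamma}}^\alpha$. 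Because $\mbox{Im}\,H_{\mathrm{\varGamma}}^\alpha$ is assumed closed, self-adjointness together with the closed range theorem shows that $H_{\mathrm{\varGamma}}^\alpha H_{\mathrm{\varGamma}}^{\alpha^*}$ restricts to a bounded bijection of $\mbox{Im}\,H_{\mathrm{\varGamma}}^\alpha$ onto itself; by the bounded inverse theorem its inverse, and hence $H_{\mathrm{\varGamma}}^{\alpha^\dag}$, is well defined and bounded on $\mbox{Im}\,H_{\mathrm{\varGamma}}^\alpha$, and satisfies $H_{\mathrm{\varGamma}}^\alpha H_{\mathrm{\varGamma}}^{\alpha^\dag} w = w$ for every $w\in\mbox{Im}\,H_{\mathrm{\varGamma}}^\alpha$.

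Finally I would substitute the proposed control $\tilde u = H_{\mathrm{\varGamma}}^{\alpha^\dag} w$ into the identity of the first step: applying $H_{\mathrm{\varGamma}}^\alpha$ and invoking the right-inverse property yields $H_{\mathrm{\varGamma}}^\alpha \tilde u = w$, whence $\chi_{_{\mathrm{\varGamma}}}(\gamma_0 y_{\tilde u}(T)) = z_d$, which is exactly exact $\mathcal{B}$-controllability on $\mathrm{\varGamma}$ at time $T$. The delicate point is the self-referential character of the formula: the effective target $w$ contains $\mathcal{F} y_{\tilde u}$, so $\tilde u = H_{\mathrm{\varGamma}}^{\alpha^\dag} w(\tilde u)$ is implicit, and the membership hypothesis must be read as holding for this very $\tilde u$. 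I therefore expect the main obstacle to be consistency/existence of such a $\tilde u$; in the present "direct" statement it is taken for granted through the image hypothesis, the actual construction of the fixed point (via the Picard contraction estimates built on $A_1$ and Lemma~\ref{lemma2.4}) being deferred to Theorem~\ref{my:thm2}. A secondary technical care is the Hilbert structure required on $U$ for $H_{\mathrm{\varGamma}}^{\alpha^*}$ to be meaningful, which I would handle by reading the pseudo inverse in the case $p=2$ with $\mathcal{U}$ a Hilbert space.
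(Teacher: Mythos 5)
Your proposal is correct and follows essentially the same route as the paper's proof: apply $\chi_{_{\mathrm{\varGamma}}}\gamma_0$ to the mild solution at $t=T$, recognize the control term as $H_{\mathrm{\varGamma}}^{\alpha}\tilde{u}$, and use that $H_{\mathrm{\varGamma}}^{\alpha}H_{\mathrm{\varGamma}}^{\alpha^\dag}$ acts as the identity on $\mbox{Im}\,H_{\mathrm{\varGamma}}^{\alpha}$ (the paper phrases this as the orthogonal projection onto the closed image), so the hypothesis on the effective target yields $\chi_{_{\mathrm{\varGamma}}}\gamma_0 y_{\tilde{u}}(T)=z_d$. Your added justification of the bounded invertibility of $H_{\mathrm{\varGamma}}^{\alpha}H_{\mathrm{\varGamma}}^{\alpha^*}$ on the closed image, and your explicit flagging of the self-referential (fixed-point) nature of $\tilde{u}$, are sound elaborations of points the paper leaves implicit.
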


\begin{proof} 
The mild solution of \eqref{semili} associated with the control
$\tilde{u}$  is given by
\begin{equation}
\label{solreduit}
y_{\tilde{u}}(t) = \mathcal{H}_\alpha(t)y_0 + (\mathcal{E}_\alpha\ast 
\mathcal{F} y_{\tilde{u}})(t) + (\mathcal{E}_\alpha\ast \mathbf{B}\tilde{u})(t),
\end{equation}
which implies that 
$$
\chi_{_{\mathrm{\varGamma}}}\gamma_0 y_{\tilde{u}}(T)
=\chi_{_{\mathrm{\varGamma}}}\gamma_0( \mathcal{H}_\alpha(T)y_0 
+(\mathcal{E}_\alpha\ast \mathcal{F} y_{\tilde{u}})(T)) 
+ H_{\mathrm{\varGamma}}^{\alpha}\tilde{u}(T).
$$
Therefore,
\begin{equation*}
\begin{array}{lll}
\chi_{_{\mathrm{\varGamma}}}\gamma_0 y_{\tilde{u}}(T) 
= \chi_{_{\mathrm{\varGamma}}}\gamma_0( \mathcal{H}_\alpha(T)y_0 
+(\mathcal{E}_\alpha\ast \mathcal{F} y_{\tilde{u}})(T))\\
\qquad + H_{\mathrm{\varGamma}}^{\alpha}
\mathrm{H}_{\mathrm{\varGamma}}^{\alpha^\dag}[z_d
- \chi_{_{\mathrm{\varGamma}}}\gamma_0 (\mathcal{H}_\alpha(T)y_0
-(\mathcal{E}_\alpha\ast \mathcal{F} y_{\tilde{u}})(T))].
\end{array} 
\end{equation*}
Since $H_{\mathrm{\varGamma}}^{\alpha}\mathrm{H}_{\mathrm{\varGamma}}^{\alpha^\dag}$ 
is the orthogonal projection on $\mbox{Im}H_{\mathrm{\varGamma}}^{\alpha}$ and 
$$
[z_d- \chi_{_{\mathrm{\varGamma}}}\gamma_0 (\mathcal{H}_\alpha(T)y_0
- (\mathcal{E}_\alpha\ast \mathcal{F} y_{\tilde{u}})(T))]
\in \mbox{Im} H_{\mathrm{\varGamma}}^\alpha,
$$
then $\chi_{_{\mathrm{\varGamma}}}\gamma_0 y_{\tilde{u}}(T) =z_d$. 
The proof is complete.
\end{proof}

Next, we prove the regional $\mathcal{B}$-controllability  
in the analytical setting by using the internal regional 
controllability result. More precisely, we formulate and 
establish some conditions for the regional $\mathcal{B}$-controllability 
on $\mathrm{\varGamma}$ of system \eqref{semili} with $y_0=0$. 
To do this, we first prove the connection between internal and boundary 
regional controllability, where we choose  a suitable sub-region $\omega_c$. 
Secondly, we show that, under some  assumptions, the internal regional 
controllability in $\omega_c$ is implied by approximate 
regional controllability of the corresponding linear system.

Let $\omega_c$  be a subregion of $\Omega$ such that 
$\mathrm{\varGamma}\subseteq \partial\omega_c$.
Proposition~\ref{prop} presents a link between 
internal regional controllability and the boundary one.

\begin{proposition}  
\label{prop}
The exact (resp. approximate) $\omega_c$-controllability 
of system \eqref{semili} implies the exact (resp. approximate) 
regional $\mathcal{B}$-controllability on $\mathrm{\varGamma}$.
\end{proposition}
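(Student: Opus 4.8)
The plan is to transfer the internal controllability on $\omega_c$ to the boundary controllability on $\mathrm{\varGamma}$ through the trace operator attached to the subregion $\omega_c$. First I would introduce the trace operator $\gamma_0^{\omega_c}$ from $H^1(\omega_c)$ onto $H^{\frac{1}{2}}(\partial\omega_c)$, together with the restriction $\chi_{_{\mathrm{\varGamma}}}$ from $H^{\frac{1}{2}}(\partial\omega_c)$ to $H^{\frac{1}{2}}(\mathrm{\varGamma})$, both of which are continuous and onto. The cornerstone is the compatibility identity
$$
\chi_{_{\mathrm{\varGamma}}}\gamma_0 y = \chi_{_{\mathrm{\varGamma}}}\gamma_0^{\omega_c}(\chi_{\omega_c} y), \qquad \forall y\in H^1(\Omega),
$$
which holds because the trace is a local operator and, since $\mathrm{\varGamma}\subseteq\partial\Omega\cap\partial\omega_c$, the trace of $y$ on $\mathrm{\varGamma}$ depends only on the values of $y$ in a one-sided neighbourhood of $\mathrm{\varGamma}$ contained in $\omega_c$; the two restrictions appearing on the left and on the right both land in $H^{\frac{1}{2}}(\mathrm{\varGamma})$ and agree there.

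Next, given a boundary target $z_d\in H^{\frac{1}{2}}(\mathrm{\varGamma})$, I would use the surjectivity of $\gamma_0^{\omega_c}$ and of $\chi_{_{\mathrm{\varGamma}}}$ to select an internal target $d_s\in H^1(\omega_c)$ with $\chi_{_{\mathrm{\varGamma}}}\gamma_0^{\omega_c} d_s = z_d$. In the exact case, the assumed exact $\omega_c$-controllability furnishes a control $u\in U$ with $\chi_{\omega_c} y_u(T)=d_s$; inserting this into the compatibility identity yields
$$
\chi_{_{\mathrm{\varGamma}}}\gamma_0 y_u(T) = \chi_{_{\mathrm{\varGamma}}}\gamma_0^{\omega_c}\chi_{\omega_c} y_u(T) = \chi_{_{\mathrm{\varGamma}}}\gamma_0^{\omega_c} d_s = z_d,
$$
which is exactly $\mathcal{B}$-controllability on $\mathrm{\varGamma}$.

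For the approximate case, let $\kappa$ denote the operator norm of the continuous map $\chi_{_{\mathrm{\varGamma}}}\gamma_0^{\omega_c}\colon H^1(\omega_c)\to H^{\frac{1}{2}}(\mathrm{\varGamma})$. Given $\varepsilon>0$, approximate $\omega_c$-controllability produces a control $u\in U$ with $\|\chi_{\omega_c} y_u(T)-d_s\|_{H^1(\omega_c)}\leq \varepsilon/\kappa$, and then the compatibility identity together with continuity gives
$$
\|\chi_{_{\mathrm{\varGamma}}}\gamma_0 y_u(T)-z_d\|_{H^{\frac{1}{2}}(\mathrm{\varGamma})} = \|\chi_{_{\mathrm{\varGamma}}}\gamma_0^{\omega_c}(\chi_{\omega_c} y_u(T)-d_s)\|_{H^{\frac{1}{2}}(\mathrm{\varGamma})} \leq \kappa\,\frac{\varepsilon}{\kappa} = \varepsilon,
$$
establishing approximate $\mathcal{B}$-controllability on $\mathrm{\varGamma}$.

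I expect the main obstacle to be the rigorous justification of the compatibility identity, since it hinges on the locality of the trace and on the geometric hypothesis $\mathrm{\varGamma}\subseteq\partial\Omega\cap\partial\omega_c$; pinning down the identification of the two trace operators on $\mathrm{\varGamma}$ and the associated function-space identifications is the delicate point. Once that identity is secured, the remainder is a routine application of the surjectivity of the trace operator (for the exact statement) and of its continuity (for the approximate statement).
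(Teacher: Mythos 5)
Your proposal is correct and follows essentially the same route as the paper: lift the boundary target $z_d$ to an internal target in $H^1(\omega_c)$ via the surjectivity of the trace (the paper does this with an explicit right inverse $R$ of $\gamma_{0|\omega_c}$ from the trace theorem), apply the assumed exact or approximate $\omega_c$-controllability, and push the conclusion back to $\mathrm{\varGamma}$ through the trace--restriction compatibility and the continuity of $\gamma_0$. If anything, your treatment of the approximate case is slightly more careful than the paper's, since you track the operator norm $\kappa$ and request tolerance $\varepsilon/\kappa$, whereas the paper absorbs this constant tacitly --- harmless, because approximate controllability holds for every tolerance.
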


\begin{proof}
We define the operator $R$ from 
$H^{\frac{1}{2}}(\partial\omega_c)$ into $ H^1(\omega_c)$ 
such that $\gamma_{0_{|\omega_c}} Rg=g$  
for all $g\in H^{\frac{1}{2}}(\partial\omega_c)$. 
Let $z_d\in H^{\frac{1}{2}}(\mathrm{\varGamma})$. 
By using the trace theorem, there exists $R\overline{z}_d\in H^1(\omega_c)$ 
with a bounded support such that 
$\gamma_{0_{|\omega_c}} R\overline{z}_d=\overline{z}_d$. Then, \par
(i) If system \eqref{semili} is exactly $\omega_c-$controllable, 
then, for any $d_s\in H^1(\omega_c)$, there exists $u\in U$  
such that $\chi_{\omega_c} y_u(T)=d_s$. Since 
$R\overline{z}_d\in H^1(\omega_c)$, we get that
$$
\exists u\in U \quad \mbox{such that} 
\quad \gamma_{0_{|\omega_c}}(\chi_\omega y_u(T))
= \overline{z}_d.  
$$ 
Thus, $\chi_{_{{\mathrm{\varGamma}_{|\partial\omega_c}}}}
\gamma_{0_{|\omega_c}}(\chi_{\omega_c} y_u(T))= z_d$ and 
$\chi_{{\mathrm{\varGamma}}}\gamma_{0}( y_u(T))= z_d$.\par  
(ii) In the case where system \eqref{semili} is approximately regionally  
controllable in $\omega_c$, for any $d_s\in H^1(\omega_c)$ and for all 
$\varepsilon>0$, there exists a control $u\in U$ such that 
$||\chi{_{\omega_c}} y_u(T)-d_s||_{H^1(\omega_c)}\leq \varepsilon$.
We have $R\overline{z}_d\in H^1(\omega_c)$ and
$$
\forall \varepsilon>0 \quad \exists u\in U 
\quad |||\chi{_{\omega_c}} y_u(T)-R\overline{z}_d||_{H^1(\omega_c)}
\leq \varepsilon.
$$ 
Moreover, by the continuity of  $\gamma_0$ on $H^1(\omega_c)$, we have 
$$
||\gamma_{0_{|\omega_c}}\chi{_{\omega_c}} y_u(T)
-\overline{z}_d||_{H^{\frac{1}{2}}(\partial\omega_c)}\leq \varepsilon.
$$
Therefore, 
$$
||\chi_{_{{\mathrm{\varGamma}_{|\omega_c}}}}\gamma_{0_{|\omega_c}}
\chi{_{\omega_c}} y_u(T)-z_d||_{H^{\frac{1}{2}}(\mathrm{\varGamma})}
\leq \varepsilon,
$$
and system \eqref{semili} is approximate boundary regional controllable 
on $\mathrm{\varGamma}$.
\end{proof}

To formulate and prove a result of regional controllability 
in a subregion $\omega_c$ such that 
$\mathrm{\varGamma}\subseteq \partial\omega_c$,
we make use of the following hypotheses:\\
$(H_1)$ The corresponding linear system of \eqref{semili} 
is approximately $\omega_c-$controllable.\\
$(H_2)$ The nonlinear operator 
$\mathcal{F} :L^p([0,T],X^q)\longrightarrow L^p([0,T],X)$ 
satisfies
\begin{equation*}
\left\{\begin{array}{lll}
\mathcal{F}(0)=0,& &\\
||\mathcal{F} z-\mathcal{F} y||_{L^p([0,T],X)} 
& \leq &F_N(||z||,||y||)||z-y||_{L^p([0,T],X^q)},
\end{array} \right.
\end{equation*}
where $F_N:\mathbb{R}^+ \times \mathbb{R}^+  
\longrightarrow \mathbb{R}^+ $ is such that
$$
\displaystyle\lim\limits_{(\sigma_1,\sigma_2)
\rightarrow(0,0)}F_N(\sigma_1,\sigma_2)=0.
$$

We introduce the following operators:
$$
\begin{array}{llll}
\mathrm{H}_{\omega_c}^\alpha:
& U &\longrightarrow & H^1(\omega_c)\\ 
&u & \longmapsto &  	\chi_{\omega_c} 
(\mathcal{E}_\alpha\ast \mathbf{B} u),	
\end{array}
$$
and
$$
\Psi(d_s,u)=\mathrm{H}^{\alpha^\dag}_{\omega_c}(d_s-\chi_\omega 
(\mathcal{E}_\alpha\ast \mathcal{F} y_u)(T)) 
\quad d_s\in \mbox{Im} (H_{\omega_c}^\alpha), u\in U,
$$
where $H_{\omega_c}^{\alpha^\dag}$ 
is the pseudo-inverse operator of $ H_{\omega_c}^\alpha$.

The following lemmas are also needed.

\begin{lemma}[See \cite{me}]
\label{lemma 3.3} 
For any $d_s\in \mbox{Im} (H_{\omega_c}^\alpha)$, 
the mapping 
$$
||d_s||_{\mbox{Im} (H_{\omega_c}^\alpha)}
=||H_{\omega_c}^{\alpha^\dag} d_s||_{_U}
$$ 
is a norm on $\mbox{Im} (H_{\omega_c}^\alpha)$.
\end{lemma}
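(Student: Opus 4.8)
The plan is to check the three defining properties of a norm for the mapping $N(d_s) := ||H_{\omega_c}^{\alpha^\dag} d_s||_U$ on the linear subspace $\mbox{Im}(H_{\omega_c}^\alpha)$ of $H^1(\omega_c)$. Because the pseudo-inverse $H_{\omega_c}^{\alpha^\dag} = H_{\omega_c}^{\alpha^*}(H_{\omega_c}^\alpha H_{\omega_c}^{\alpha^*})^{-1}$ is a linear operator and $||\cdot||_U$ is already a norm on $U$, two of the three axioms are inherited without effort. First I would record homogeneity: for any scalar $\lambda$, linearity gives $H_{\omega_c}^{\alpha^\dag}(\lambda d_s) = \lambda H_{\omega_c}^{\alpha^\dag} d_s$, whence $N(\lambda d_s) = |\lambda|\, N(d_s)$. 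The triangle inequality follows the same way: for $d_s, e_s \in \mbox{Im}(H_{\omega_c}^\alpha)$, linearity combined with subadditivity of $||\cdot||_U$ yields $N(d_s + e_s) = ||H_{\omega_c}^{\alpha^\dag} d_s + H_{\omega_c}^{\alpha^\dag} e_s||_U \leq N(d_s) + N(e_s)$. Nonnegativity is automatic.

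The step I expect to be the main obstacle is positive definiteness, namely that $N(d_s) = 0$ forces $d_s = 0$. Since $||\cdot||_U$ separates points, $N(d_s) = 0$ is equivalent to $H_{\omega_c}^{\alpha^\dag} d_s = 0$, so the task reduces to showing that $H_{\omega_c}^{\alpha^\dag}$ is injective on $\mbox{Im}(H_{\omega_c}^\alpha)$. To this end I would exploit the structural identity of the pseudo-inverse: composing on the left with $H_{\omega_c}^\alpha$ gives $H_{\omega_c}^\alpha H_{\omega_c}^{\alpha^\dag} = H_{\omega_c}^\alpha H_{\omega_c}^{\alpha^*}(H_{\omega_c}^\alpha H_{\omega_c}^{\alpha^*})^{-1}$, which reduces to the identity on $\mbox{Im}(H_{\omega_c}^\alpha)$. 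Hence, if $H_{\omega_c}^{\alpha^\dag} d_s = 0$ for some $d_s \in \mbox{Im}(H_{\omega_c}^\alpha)$, applying $H_{\omega_c}^\alpha$ gives $d_s = H_{\omega_c}^\alpha H_{\omega_c}^{\alpha^\dag} d_s = H_{\omega_c}^\alpha(0) = 0$; the converse implication is trivial from linearity.

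The genuinely delicate point hidden in the argument above is the well-definedness of $(H_{\omega_c}^\alpha H_{\omega_c}^{\alpha^*})^{-1}$ in this infinite-dimensional setting, and the precise sense in which $H_{\omega_c}^\alpha H_{\omega_c}^{\alpha^\dag}$ acts as the identity on $\mbox{Im}(H_{\omega_c}^\alpha)$ rather than on all of $H^1(\omega_c)$. I would make this rigorous by restricting attention to $\mbox{Im}(H_{\omega_c}^\alpha)$ throughout, where the composition is exactly the orthogonal projection onto the image (as already used for the analogous operator $H_{\mathrm{\varGamma}}^\alpha$ in the proof of Theorem~\ref{my:thm1}); once this identification is secured, the three axioms are established and $N$ is a bona fide norm on $\mbox{Im}(H_{\omega_c}^\alpha)$.
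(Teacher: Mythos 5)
Your proof is correct, and since the paper states Lemma~\ref{lemma 3.3} without proof (importing it from \cite{me}), there is no divergence to report: your argument --- linearity of the pseudo-inverse giving homogeneity and subadditivity, plus the identity $H_{\omega_c}^{\alpha}H_{\omega_c}^{\alpha^\dag}d_s=d_s$ for $d_s\in \mbox{Im}(H_{\omega_c}^{\alpha})$ giving definiteness --- is the standard verification and matches exactly how the paper itself uses the pseudo-inverse in the proof of Theorem~\ref{my:thm1}. You also rightly isolate the only delicate point, the invertibility of $H_{\omega_c}^{\alpha}H_{\omega_c}^{\alpha^{*}}$ underlying the formula $H_{\omega_c}^{\alpha^\dag}=H_{\omega_c}^{\alpha^{*}}\left(H_{\omega_c}^{\alpha}H_{\omega_c}^{\alpha^{*}}\right)^{-1}$, which the paper takes for granted here (securing it, in the boundary analogue, only through the closedness hypothesis of Theorem~\ref{my:thm1}).
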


\begin{lemma}
\label{lemma3.4}  
If the control operator is bounded, then the control 
function $u$ satisfies the inequality
\begin{equation}
\label{condu}
||\mathcal{E}_\alpha\ast \mathbf{B}u||_{L^p([0,T],X^q)}
\leq \mu ||u||_U, \qquad \mu>0.
\end{equation}
\end{lemma}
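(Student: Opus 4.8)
The plan is to reduce the estimate to the classical Young convolution inequality. First I would write the convolution explicitly as
$$
(\mathcal{E}_\alpha\ast \mathbf{B}u)(t)=\int_0^t \mathcal{E}_\alpha(t-\zeta)(\mathbf{B}u)(\zeta)\,d\zeta,
$$
where for each $s$ the factor $\mathcal{E}_\alpha(s)$ is an element of $\mathcal{L}(X,X^q)$ acting on $(\mathbf{B}u)(\zeta)\in X$, so that the integrand lies in $X^q$. Pulling the $X^q$-norm inside the integral gives the pointwise bound
$$
||(\mathcal{E}_\alpha\ast \mathbf{B}u)(t)||_{X^q}
\leq \int_0^t ||\mathcal{E}_\alpha(t-\zeta)||_{\mathcal{L}(X,X^q)}\,||(\mathbf{B}u)(\zeta)||_X\,d\zeta,
$$
which is precisely a scalar convolution of $\phi(s):=||\mathcal{E}_\alpha(s)||_{\mathcal{L}(X,X^q)}$ with $\psi(\zeta):=||(\mathbf{B}u)(\zeta)||_X$.

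Next I would apply Young's inequality. By Lemma~\ref{lemma2.4}(iii) we have $\mathcal{E}_\alpha\in L^1([0,T],\mathcal{L}(X,X^q))$, hence $\phi\in L^1([0,T])$ with $||\phi||_{L^1}=A_1$; and since $\mathbf{B}u\in L^p([0,T],X)$ we have $\psi\in L^p([0,T])$. Applying the classical convolution inequality for an $L^1$ function against an $L^p$ function, and using the pointwise bound above, yields
$$
||\mathcal{E}_\alpha\ast \mathbf{B}u||_{L^p([0,T],X^q)}
\leq ||\phi\ast\psi||_{L^p([0,T])}
\leq ||\phi||_{L^1([0,T])}\,||\psi||_{L^p([0,T])}
= A_1\,||\mathbf{B}u||_{L^p([0,T],X)}.
$$

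Finally I would invoke the hypothesis that the control operator is bounded: since $\mathbf{B}$ is a bounded linear map from $U$ into $L^p([0,T],X)$, we have $||\mathbf{B}u||_{L^p([0,T],X)}\leq ||\mathbf{B}||\,||u||_U$, so the claimed inequality holds with $\mu:=A_1\,||\mathbf{B}||>0$. The only delicate point is the passage from the vector-valued convolution to the scalar one, together with the measurability and integrability of the operator-valued integrand; this is exactly what the pointwise operator-norm estimate and Lemma~\ref{lemma2.4}(iii) take care of, and no genuine difficulty arises beyond this bookkeeping. I expect the main (minor) obstacle to be justifying the scalar reduction rigorously rather than the convolution estimate itself, which is standard once $\mathcal{E}_\alpha\in L^1$ is established.
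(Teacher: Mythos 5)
Your proposal is correct and coincides with the paper's own argument: the paper's proof is a single line invoking Young's convolution inequality (citing the vector-valued version in Arendt et al.) with exactly the constant $\mu = A_1\,||\mathbf{B}||$ that you obtain. Your explicit reduction to a scalar convolution is simply a worked-out justification of that same step, not a different route.
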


\begin{proof}  
The result holds with 
$\mu=A_1 ||\mathbf{B}||_{\mathcal{L}(X,\mathcal{U})}$ 
by Young's inequality \cite{Vector-valued}.
\end{proof}

\begin{remark} 
If $\mathbf{B}$ is unbounded, then we suppose that the  
inequality \eqref{condu} holds.
\end{remark}

\begin{lemma}
\label{lemma3.6} 
Assume that hypothesis ($H_2$) holds. Then there exists 
$\kappa>0$ and $m_\kappa>0$ such that 
$f: u\longrightarrow y_u$ is a Lipschitz mapping from 
$\mathrm{B}(0,m_\kappa)$ to $\mathrm{B}(0,\kappa)$,
where $\mathrm{B}(0,r)$ is the ball of center $0$ 
and radius $r>0$.
\end{lemma}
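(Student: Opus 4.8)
The plan is to realize $y_u$ as the unique fixed point of the solution map and to read off both the self-map property and the Lipschitz estimate from a single contraction bound. Since here $y_0=0$, the mild-solution identity \eqref{solreduit} reduces to $y_u = \mathcal{E}_\alpha\ast \mathcal{F} y_u + \mathcal{E}_\alpha\ast \mathbf{B}u$, so I would introduce, for each control $u$, the operator $G_u(y)=\mathcal{E}_\alpha\ast \mathcal{F} y + \mathcal{E}_\alpha\ast \mathbf{B}u$ on $L^p([0,T],X^q)$, whose fixed points are exactly the $y_u$. Two estimates drive the whole argument: Young's inequality together with Lemma~\ref{lemma2.4}(iii) gives $\|\mathcal{E}_\alpha\ast g\|_{L^p([0,T],X^q)}\leq A_1\|g\|_{L^p([0,T],X)}$ for $g\in L^p([0,T],X)$, while Lemma~\ref{lemma3.4} gives $\|\mathcal{E}_\alpha\ast \mathbf{B}u\|_{L^p([0,T],X^q)}\leq \mu\|u\|_U$.

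First I would fix $\kappa$. Using $(H_2)$ with $\mathcal{F}(0)=0$, for $\|y_1\|,\|y_2\|\leq\kappa$ one has $\|\mathcal{F} y_1-\mathcal{F} y_2\|_{L^p([0,T],X)}\leq F_N(\|y_1\|,\|y_2\|)\|y_1-y_2\|_{L^p([0,T],X^q)}$. Because $F_N(\sigma_1,\sigma_2)\to 0$ as $(\sigma_1,\sigma_2)\to(0,0)$, I can choose $\kappa>0$ so small that $A_1 F_N(\sigma_1,\sigma_2)\leq \tfrac12$ for all $\sigma_1,\sigma_2\in[0,\kappa]$. Combining the two displayed estimates then yields, for $\|y_1\|,\|y_2\|\leq\kappa$,
$$
\|G_u(y_1)-G_u(y_2)\|_{L^p([0,T],X^q)}\leq A_1 F_N(\|y_1\|,\|y_2\|)\|y_1-y_2\|\leq \tfrac12\|y_1-y_2\|_{L^p([0,T],X^q)},
$$
so $G_u$ is a contraction on $\mathrm{B}(0,\kappa)$ with constant $\tfrac12$, uniformly in $u$.

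Next I would fix the domain radius. Taking $y_2=0$ above and noting $G_u(0)=\mathcal{E}_\alpha\ast \mathbf{B}u$, for $\|y\|\leq\kappa$ I obtain $\|G_u(y)\|\leq \tfrac12\kappa+\mu\|u\|_U$, so the choice $m_\kappa:=\kappa/(2\mu)$ makes $G_u$ map $\mathrm{B}(0,\kappa)$ into itself whenever $\|u\|_U\leq m_\kappa$. Banach's fixed point theorem then produces a unique $y_u\in\mathrm{B}(0,\kappa)$, so $f(u)=y_u$ is well defined from $\mathrm{B}(0,m_\kappa)$ into $\mathrm{B}(0,\kappa)$. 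For the Lipschitz property I would subtract the fixed-point identities for $u_1,u_2\in\mathrm{B}(0,m_\kappa)$ and reuse the same bounds:
$$
\|y_{u_1}-y_{u_2}\|\leq A_1 F_N(\|y_{u_1}\|,\|y_{u_2}\|)\|y_{u_1}-y_{u_2}\|+\mu\|u_1-u_2\|_U\leq \tfrac12\|y_{u_1}-y_{u_2}\|+\mu\|u_1-u_2\|_U,
$$
whence $\|y_{u_1}-y_{u_2}\|_{L^p([0,T],X^q)}\leq 2\mu\|u_1-u_2\|_U$ and $f$ is Lipschitz.

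The crux, and the only genuinely nontrivial step, is the uniform choice of $\kappa$: one must exploit the vanishing of $F_N$ at $(0,0)$ to force the contraction constant strictly below $1$ on the entire ball, and it is precisely this that dictates small radii and ties $m_\kappa$ to $\kappa$. The remaining work is routine bookkeeping, the only subtlety being to track which norm, $X^q$ in the domain of $\mathcal{F}$ or $X$ in its range, governs each term as the estimates pass through $A_1$ and $\mu$.
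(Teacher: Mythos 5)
Your proof is correct, and it is in one respect more complete than the paper's own argument, though the driving estimates are the same. The paper proves the Lipschitz bound exactly as you do in your final display: subtract the two mild-solution identities, bound the nonlinear term by $A_1\sup_{\sigma_i\leq\kappa}F_N(\sigma_1,\sigma_2)\,\|y_u-y_v\|$ via Young's inequality and $(H_2)$, bound the control term by $\mu\|u-v\|_U$ via Lemma~\ref{lemma3.4}, and absorb the first term to get the Lipschitz constant $\mu/\bigl(1-A_1\sup F_N\bigr)$. What the paper does \emph{not} do is justify that $y_u$ exists and lies in $\mathrm{B}(0,\kappa)$ for $u\in\mathrm{B}(0,m_\kappa)$ --- yet this is needed before one may invoke the bound $F_N(\sigma_1,\sigma_2)<\nu$ valid only for $\sigma_i\leq\kappa$, so the paper's proof is slightly circular at that point. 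Your Banach fixed-point construction of $y_u$ via $G_u(y)=\mathcal{E}_\alpha\ast\mathcal{F}y+\mathcal{E}_\alpha\ast\mathbf{B}u$, with the self-map check $\|G_u(y)\|\leq\tfrac12\kappa+\mu\|u\|_U\leq\kappa$, is exactly the missing step: it delivers existence, uniqueness, and $f\bigl(\mathrm{B}(0,m_\kappa)\bigr)\subset\mathrm{B}(0,\kappa)$ in one stroke. The only caveat concerns constants: the paper fixes $\kappa$ through \eqref{majoration}, with $\nu<1/(A_1+A_2)$ where $A_2=\mu\|g_\alpha\|_{L^s([0,T])}$, and defines $m_\kappa$ by \eqref{cste1}; these specific choices are reused downstream, in the contraction estimate \eqref{mee} and in the radius $\rho_\kappa$ of Theorem~\ref{my:thm2}. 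Your normalization $A_1F_N\leq\tfrac12$, $m_\kappa=\kappa/(2\mu)$, Lipschitz constant $2\mu$ proves the lemma as stated, but if one wanted to splice your lemma into the paper's Theorem~\ref{my:thm2} one would have to redo that bookkeeping (or simply add the $A_2$ constraint to your choice of $\kappa$, which costs nothing).
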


\begin{proof}  
First, we show that there exists $\kappa>0$ such that 
\begin{equation} 
\label{cste1}
m_\kappa=\dfrac{\kappa}{\mu}(1-A_1\: 
\underset{\theta\leq \kappa}{\sup} 
\: F_N(\theta,0))>0.
\end{equation}
We have, by hypothesis ($H_2$), that
$$
\displaystyle\lim\limits_{(\sigma_1,\sigma_2)
\rightarrow(0,0)}F_N(\sigma_1,\sigma_2)=0.
$$
Then $\exists \:  \kappa>0$ 
and $\exists \: \nu >0$ such that
\begin{equation}
\label{majoration}
F_N(\sigma_1,\sigma_2) < \nu< \dfrac{1}{A_2+A_1 } 
\qquad \forall \sigma_1,\sigma_2\leq \kappa,
\end{equation}
where $A_2>0$. This gives
$$
\underset{\sigma_i\leq \kappa}{\sup} 
\: F_N(\sigma_1,\sigma_2) \leq \nu< \dfrac{1}{A_2+A_1}
$$
with $A_1\underset{\sigma_i\leq \kappa}{\sup} \:     
F_N(\sigma_1,\sigma_2)<1$. In particular, $m_\kappa>0$.
Let	$u,v \in \mathrm{B}(0,m_\kappa)$. Then, 
\begin{equation*}
|| y_u-y_v||_{_{L^p([0,T],X^q)}}
\leq ||\mathcal{E}_\alpha\ast (\mathcal{F} y_u
-\mathcal{F} y_v)||_{_{L^p([0,T],X^q)}}
+||\mathcal{E}_\alpha\ast \mathbf{B}(u-v)||_{_{L^p([0,T],X^q)}}.
\end{equation*}
Using hypothesis ($H_2$) and Lemma~\ref{lemma 3.3}, we obtain that
\begin{equation}
\begin{split}
||y_u-y_v||_{_{L^p([0,T],X^q)}}
&\leq A_1 \underset{\sigma_i
\leq \kappa}{\sup} \: F_N(\sigma_1,\sigma_2)\\
&\quad \times  || y_u-y_v||_{_{L^p([0,T],X^q)}}
+\mu||u-v||_U.
\end{split}
\end{equation}
Then, $f$ is Lipschitz with the Lipschitz constant
$\dfrac{\mu}{1-A_1\underset{\sigma_i\leq \kappa}{\sup} 
\: F_N(\sigma_1,\sigma_2)}$.
\end{proof}

We prove that system \eqref{semili} is $\omega_c$-controllable 
by studying the existence of a fixed point for operator $\Psi(d_s,\cdot)$.

\begin{theorem}  
\label{my:thm2}	
If hypotheses ($H_1$) and ($H_2$) hold, together with
\begin{equation}
||\chi_{\omega_c} \mathcal{E}_\alpha(\cdot)||_{\mathcal{L}(X,
\mbox{Im} (H_{\omega_c}^{^\alpha}))}=g_\alpha\in L^s([0,T]), 
\quad \dfrac{1}{p}+\dfrac{1}{s}=1,
\end{equation}
then there exists $\kappa>0$, $m_\kappa>0$, and $\rho_\kappa>0$  
such that, for any element $d_s$ of $\mathrm{B}(0,\rho_\kappa)$, 
a subset of $\mbox{Im} (H_{\omega_c}^{^\alpha})$, we can find 
a control $\tilde{u}$ in $\mathrm{B}(0,m_\kappa)$ steering 
system \eqref{semili} from $y_0$ to $d_s$ at time $T$ in $\omega_c$.
\end{theorem}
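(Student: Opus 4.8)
The plan is to recast $\omega_c$-controllability as the existence of a fixed point of the map $u\mapsto\Psi(d_s,u)$ and to obtain that fixed point from the Picard (contraction) fixed-point theorem on the ball $\mathrm{B}(0,m_\kappa)$ supplied by Lemma~\ref{lemma3.6}. First I would record why a fixed point suffices. If $\tilde{u}=\Psi(d_s,\tilde{u})$, then applying $\mathrm{H}_{\omega_c}^\alpha$ and using that $\mathrm{H}_{\omega_c}^\alpha \mathrm{H}_{\omega_c}^{\alpha^\dag}$ is the orthogonal projection onto $\mbox{Im}(\mathrm{H}_{\omega_c}^\alpha)$, together with $d_s\in\mbox{Im}(\mathrm{H}_{\omega_c}^\alpha)$, gives $\mathrm{H}_{\omega_c}^\alpha\tilde{u}=d_s-\chi_{\omega_c}(\mathcal{E}_\alpha\ast\mathcal{F} y_{\tilde{u}})(T)$. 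Since $y_0=0$, the mild solution reads $y_{\tilde{u}}(T)=(\mathcal{E}_\alpha\ast\mathcal{F} y_{\tilde{u}})(T)+(\mathcal{E}_\alpha\ast\mathbf{B}\tilde{u})(T)$, so restricting to $\omega_c$ and substituting $\mathrm{H}_{\omega_c}^\alpha\tilde{u}=\chi_{\omega_c}(\mathcal{E}_\alpha\ast\mathbf{B}\tilde{u})(T)$ collapses the nonlinear terms and yields $\chi_{\omega_c}y_{\tilde{u}}(T)=d_s$, which is exactly $\omega_c$-controllability at time $T$. Here $(H_1)$ is what makes $\mbox{Im}(\mathrm{H}_{\omega_c}^\alpha)$ dense and the pseudo-inverse machinery of Lemma~\ref{lemma 3.3} legitimate.

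The core of the argument is to show that $\Psi(d_s,\cdot)$ maps $\mathrm{B}(0,m_\kappa)$ into itself and is a strict contraction for a suitable triple $\kappa,m_\kappa,\rho_\kappa$. The key estimate comes from the integrability hypothesis: writing the convolution as $\int_0^T\chi_{\omega_c}\mathcal{E}_\alpha(T-\zeta)\,\mathcal{F} y_u(\zeta)\,d\zeta$ and applying H\"older's inequality with $\tfrac1p+\tfrac1s=1$ gives $\|\chi_{\omega_c}(\mathcal{E}_\alpha\ast\mathcal{F} y_u)(T)\|_{\mbox{Im}(\mathrm{H}_{\omega_c}^\alpha)}\leq A_2\,\|\mathcal{F} y_u\|_{L^p([0,T],X)}$ with $A_2:=\|g_\alpha\|_{L^s([0,T])}$, which identifies the constant $A_2$ already appearing in Lemma~\ref{lemma3.6}. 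Combining this with the norm identity of Lemma~\ref{lemma 3.3}, with $(H_2)$ and $\mathcal{F}(0)=0$, and with the bound $\|y_u\|_{L^p([0,T],X^q)}\leq\kappa$ from Lemma~\ref{lemma3.6}, the self-mapping property reduces to $\rho_\kappa+A_2\,\kappa\sup_{\theta\leq\kappa}F_N(\theta,0)\leq m_\kappa$, which I would secure by \emph{defining} $\rho_\kappa:=m_\kappa-A_2\,\kappa\sup_{\theta\leq\kappa}F_N(\theta,0)$ and checking $\rho_\kappa>0$ through \eqref{cste1}. For the contraction, the same H\"older estimate and Lemma~\ref{lemma 3.3} give $\|\Psi(d_s,u)-\Psi(d_s,v)\|_U\leq A_2\sup_{\sigma_i\leq\kappa}F_N(\sigma_1,\sigma_2)\,\|y_u-y_v\|_{L^p([0,T],X^q)}$, and composing with the Lipschitz bound $\mu/(1-A_1\sup F_N)$ of $f\colon u\mapsto y_u$ from Lemma~\ref{lemma3.6} yields the Lipschitz constant $A_2\mu\sup F_N/(1-A_1\sup F_N)$ for $\Psi(d_s,\cdot)$.

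The main obstacle is the simultaneous calibration of the two smallness requirements: $\rho_\kappa>0$ (self-mapping) and $A_2\mu\sup F_N<1-A_1\sup F_N$ (strict contraction). Both reduce to $\sup_{\theta\leq\kappa}F_N(\theta,0)<(A_1+\mu A_2)^{-1}$, which is precisely what the limit $\lim_{(\sigma_1,\sigma_2)\to(0,0)}F_N(\sigma_1,\sigma_2)=0$ in $(H_2)$ lets us enforce by shrinking $\kappa$, exactly as in \eqref{majoration}. Once $\kappa$ is fixed so small that both inequalities hold, the Picard fixed-point theorem delivers, for every $d_s\in\mathrm{B}(0,\rho_\kappa)\subset\mbox{Im}(\mathrm{H}_{\omega_c}^\alpha)$, a unique $\tilde{u}\in\mathrm{B}(0,m_\kappa)$ with $\Psi(d_s,\tilde{u})=\tilde{u}$, and the opening reduction then gives $\chi_{\omega_c}y_{\tilde{u}}(T)=d_s$, completing the proof; by Proposition~\ref{prop} this simultaneously yields approximate $\mathcal{B}$-controllability on $\mathrm{\varGamma}$.
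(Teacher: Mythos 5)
Your proposal is correct and takes essentially the same approach as the paper's proof: a Picard contraction argument for $\Psi(d_s,\cdot)$ on $\mathrm{B}(0,m_\kappa)$, with the H\"older estimate furnished by $g_\alpha\in L^s([0,T])$, the Lipschitz bound of Lemma~\ref{lemma3.6}, the calibration of $\kappa$ through \eqref{majoration}, and a radius $\rho_\kappa$ that coincides with the paper's choice after the relabeling $A_2=\mu\,||g_\alpha||_{L^s([0,T])}$. The only (welcome) difference is cosmetic: you spell out why a fixed point of $\Psi(d_s,\cdot)$ actually gives $\chi_{\omega_c}y_{\tilde{u}}(T)=d_s$, a verification the paper asserts without detail.
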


\begin{proof} 
We show that there exists $\rho_\kappa>0$ where,  
for all $d_s\in \mathrm{B}(0,\rho_\kappa)$, the operator 
$\Psi(d_s,\cdot)$ defined from $\mathrm{B}(0,m_\kappa)$ 
into $\mathrm{B}(0,m_\kappa)$ has a fixed point, 
which is then a control steering system \eqref{semili} 
to $d_s$ in $\omega_c$ at time $T$. The proof uses  
the classical fixed point theorem of a contraction mapping.

(i) We show that $\Psi(d_s,\cdot)$ is a contraction mapping.
Let us consider $d_s$ in $\mbox{Im}\:( H^\alpha_{\omega_c})$ 
and $u,v$ in $\mathrm{B}(0,m_\kappa)$, where $m_\kappa$ 
is defined by \eqref{cste1}. Then, 
\begin{equation*}
\begin{split}
||\Psi(d_s,u)-\Psi(d_s,v)||_U
&=||(\chi_{\omega_c}\mathcal{E}_\alpha\ast(\mathcal{F} 
y_u-\mathcal{F} y_v))(T)||_{\mbox{Im}\: (H^\alpha_{\omega_c})}\\
&\leq  ||g_\alpha||_{L^s([0,T])} ||
\mathcal{F} y_u-\mathcal{F} y_v||_{L^p([0,T],X)}\\
&\leq ||g_\alpha||_{L^s([0,T])} 
\underset{(\sigma_i\leq \kappa)}{\sup} F_N(\sigma_1,\sigma_2)
\times ||y_u-y_v||_{L^p([0,T],X^q)}.
\end{split}	
\end{equation*}
It is known from Lemma~\ref{lemma3.6} that $u\longrightarrow y_u$ 
is a Lipschitz mapping from $\mathrm{B}(0,m_\kappa)$ into 
$\mathrm{B}(0,\kappa)$. Then, we obtain that
\begin{equation}
\label{mee}
||\Psi(d_s,u)-\Psi(d_s,v)||_U \leq \dfrac{\mu ||g_\alpha||_{L^s([0,T])}
\underset{(\sigma_i\leq \kappa)}{\sup} F_N(\sigma_1,\sigma_2)}{1-A_1
\underset{(\sigma_i\leq \kappa)}{\sup} F_N(\sigma_1,\sigma_2)} ||u-v||_U.
\end{equation} 
If we consider $\mu ||g_\alpha||_{L^s([0,T])}:=A_2$ in inequality 
\eqref{majoration}, we have 
$$
(A_1+A_2)\underset{(\sigma_i
\leq \kappa)}{\sup} F_N(\sigma_1,\sigma_2)<1. 
$$
Consequently, 
$$
A_s:=\dfrac{\mu ||g_\alpha||_{L^s([0,T])}\underset{(\sigma_i
\leq \kappa)}{\sup} F_N(\sigma_1,\sigma_2)}{1-A_1\underset{(\sigma_i
\leq \kappa)}{\sup} F_N(\sigma_1,\sigma_2)}<1, 
$$
and thus $\Psi(d_s,\cdot)$ is a strict contraction mapping.

(ii) We now show that $\Psi(d_s,\cdot)$ maps 
$\mathrm{B}(0,m_\kappa)$ into  itself.
Consider $u\in\mathrm{B}(0,m_\kappa)$. 
We have $y_u\in \mathrm{B}(0,\kappa)$ and 
$$
\begin{array}{llll}
||\Psi(d_s,u)||_U
&=&||d_s-(\chi_{\omega_c}\mathcal{E}_\alpha\ast 
\mathcal{F} y_u)(T)||_{\mbox{Im}\: (H^\alpha_{\omega_c})}\\
&\leq& ||d_s||_{\mbox{Im}\: (H^\alpha_{\omega_c})}
+||(\chi_{\omega_c} \mathcal{E}_\alpha\ast \mathcal{F} y_u)(T)||_{\mbox{Im}\: ()}\\
& \leq& ||d_s||_{\mbox{Im}\: H^\alpha_{\omega_c}}+||g_\alpha||_{L^s([0,T])} 
\kappa \underset{(\theta\leq \kappa)}{\sup} F_N(\theta,0).
\end{array}
$$
Therefore, if 
$$
||d_s||_{\mbox{Im}\: (H^\alpha_{\omega_c})}
\leq m_\kappa- ||g_\alpha||_{L^s([0,T])} 
\kappa \underset{(\theta\leq \kappa)}{\sup} F_N(\theta,0),
$$
then $\Psi(d_s,u)\in\mathrm{B}(0,m_\kappa)$.
Now, take  
$$
\rho_\kappa=\dfrac{\kappa}{\mu}(1-(A_1+A_2) 
\underset{\theta\leq \kappa}{\sup} \: F_N(\theta,0)), 
$$
which is a positive constant. Then $\varPsi(\mathrm{B}(0,\rho_\kappa),
\mathrm{B}(0,m_\kappa))\subset \mathrm{B}(0,m_\kappa)$.

By the contraction mapping theorem, the existence of a fixed 
point of $\Psi(d_s,\cdot)$ is shown. The theorem is proved.
\end{proof}

By the contraction mapping theorem, 
we can also obtain the following result.

\begin{corollary}
The sequence
\begin{equation}
\label{suite}
\begin{cases}
u_0=0, \\
u_{n+1}= \mathrm{H}^{\alpha^\dag}_{\omega_c}(d_s
-\chi_{\omega_c }(\mathcal{E}_\alpha\ast \mathcal{F} y_{u_n})(T)),      
\end{cases}
\end{equation}
converges  to $\tilde{u}$ in $\mathrm{B}(0,m_\kappa)$.
\end{corollary}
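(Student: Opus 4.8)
The plan is to recognize the recursion \eqref{suite} as the sequence of Picard iterates of the map $\Psi(d_s,\cdot)$ already studied in Theorem~\ref{my:thm2}, and then to invoke the iterative (constructive) part of the contraction mapping theorem. Comparing \eqref{suite} with the definition of $\Psi$, one sees that the recursion is precisely $u_{n+1}=\Psi(d_s,u_n)$ with $u_0=0$, so the whole corollary amounts to asserting that, for $d_s\in\mathrm{B}(0,\rho_\kappa)$, the Picard orbit of $0$ under $\Psi(d_s,\cdot)$ converges to its fixed point $\tilde u$.

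First I would establish that the sequence is well defined and remains inside the ball on which the contraction estimate is valid. Since $m_\kappa>0$ by \eqref{cste1}, we have $u_0=0\in\mathrm{B}(0,m_\kappa)$; and by part (ii) of Theorem~\ref{my:thm2}, for $d_s\in\mathrm{B}(0,\rho_\kappa)$ the map $\Psi(d_s,\cdot)$ sends $\mathrm{B}(0,m_\kappa)$ into itself. An immediate induction then gives $u_n\in\mathrm{B}(0,m_\kappa)$ for every $n$. Because $\mathrm{B}(0,m_\kappa)$ is a closed subset of the Banach space $U=L^p([0,T],\mathcal{U})$, it is itself a complete metric space for the norm $\|\cdot\|_U$.

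Next I would run the standard Cauchy argument. From part (i) of Theorem~\ref{my:thm2}, $\Psi(d_s,\cdot)$ is a strict contraction with constant $A_s<1$, so that
\begin{equation*}
\|u_{n+1}-u_n\|_U=\|\Psi(d_s,u_n)-\Psi(d_s,u_{n-1})\|_U\leq A_s\,\|u_n-u_{n-1}\|_U\leq A_s^{\,n}\,\|u_1-u_0\|_U.
\end{equation*}
Summing the geometric series $\sum_{n\geq 0}A_s^{\,n}\|u_1\|_U<\infty$ shows that $(u_n)$ is Cauchy in the complete space $\mathrm{B}(0,m_\kappa)$, hence convergent to some $\tilde u\in\mathrm{B}(0,m_\kappa)$. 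Passing to the limit in $u_{n+1}=\Psi(d_s,u_n)$ and using continuity of the contraction identifies $\tilde u$ as the fixed point $\Psi(d_s,\tilde u)=\tilde u$ produced in Theorem~\ref{my:thm2}, with the explicit rate $\|u_n-\tilde u\|_U\leq A_s^{\,n}(1-A_s)^{-1}\|u_1\|_U$.

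There is no serious obstacle here: the statement is essentially the constructive formulation of the contraction mapping theorem, whose hypotheses — self-mapping of the ball and strict contraction — were already verified in the proof of Theorem~\ref{my:thm2}. The only point deserving care is the invariance step, since the contraction bound $A_s<1$ is guaranteed only on $\mathrm{B}(0,m_\kappa)$; this is exactly why one needs $d_s\in\mathrm{B}(0,\rho_\kappa)$, so that part (ii) keeps every iterate inside that ball.
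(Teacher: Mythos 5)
Your proposal is correct and coincides with the paper's route: the paper states the corollary as an immediate consequence of the contraction mapping theorem applied to $\Psi(d_s,\cdot)$, whose self-mapping and strict-contraction properties were established in the proof of Theorem~\ref{my:thm2}, exactly the two ingredients you verify. Your write-up merely makes explicit the standard Picard-iteration details (invariance by induction, geometric-series Cauchy estimate, passage to the limit) that the paper leaves implicit.
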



\section{Numerical Approach and Examples}
\label{sec4}

In this section, based on the theoretical results 
of Section~\ref{sec3}, we present an algorithm 
that allows us to find, numerically, a control 
function that steers our system to a target state on 
$\varGamma$ at time $T$: see Algorithm~\ref{euclid}.

\begin{algorithm}
\caption{}\label{euclid}
\begin{algorithmic}[H]
\BState \emph{\bf{Initial Data:}}
\State $\bullet$  $\alpha$,  $T$,$\mathrm{\varGamma}$, 
the desired state on $\mathrm{\varGamma}$ $z_d$, and the 
location of the considered zonal or pointwise actuator;
\State $\bullet$ region $\omega_c$ 
where $\mathrm{\varGamma}\subseteq \partial\omega_c$ and    
$d_s$ the extension of $z_d$  in $\omega_c$;
\State $\bullet$ error estimate $\varepsilon$.

\State Initial datum: $r_1=d_s$.  
		
\BState \emph{\bf{Repeat}}
\State $\bullet$ Solve equation $u_n=H^{\alpha \dag}_{\omega_c} r_n$.
\State 	$\bullet$ Solve the semilinear system controlled 
by $u_n$ and obtain $y_{u_n}(T)$.
\State $\bullet$ Compute $r_{n+1}$ by the formula 
$$
r_{n+1}=r_{n}+(d_s-\chi_{\omega_c} y_{u_{n}}(T)), \quad n\geq 2.
$$
\BState \emph{\bf{Until}}
\State $$||r_{n+1}-r_n||_{Im (\mathrm{H}^\alpha_{\omega_c})}\leq\varepsilon.$$
\end{algorithmic}
\end{algorithm}

We apply our Algorithm~\ref{euclid} 
in two examples of time fractional diffusion systems.

\begin{example} 
\label{ex01}
Let $\Omega=]0,1[\times]0,1[$ and $T=3$.
Consider the two-dimensional fractional system 
with diffusion described as follows:
\begin{equation}
\label{simulation}
\left\{\begin{array}{llll}
^{^{C}}D_{0^+}^{^{0.3}}  \theta(x,y,t) 
-\dfrac{\partial^2}{\partial x^2} \theta(x,y,t)
-\dfrac{\partial^2}{\partial y^2} \theta(x,y,t) 
=\theta^2(x,y,t)+ \chi_{_{D}}u(t)  
&  \text{ in } \Omega\times\left]0,3\right] \\
\dfrac{\partial \theta}{\partial\nu_{A}}(\xi,\nu,t) =0 
&\text{ on } \partial\Omega\times\left]0,3\right]\\
\theta(x,y,0) = 0  &\text{ in } \Omega.
\end{array}  \right.
\end{equation} 
In this example, we consider a bounded control operator as a zonal actuator. 
We take $D=[0,0.2]\times[0.2,0.4]$ as the location of the actuator. 

The desired state on $\varGamma=\{0\}\times[0,0.1]$ 
is taken as $\theta_d(y)=7 y^3-13y^2+3$. Applying Algorithm~\ref{euclid} 
with $\varepsilon=10^{-3}$, $\omega_c=[0,0.3]\times[0,0.1]$,  
and the extension of $z_d$ in $\omega_c$  
$$
d_s(x,y)=10\left(\dfrac{x^3}{46}-\dfrac{x^2}{62}+0.1\right)
\left(7 y^3-13y^2+3\right),
$$
we obtain the results given in Figures~\ref{fig1} and \ref{fig2}.
\begin{figure}[ht!]
\centering
\includegraphics[scale=0.6]{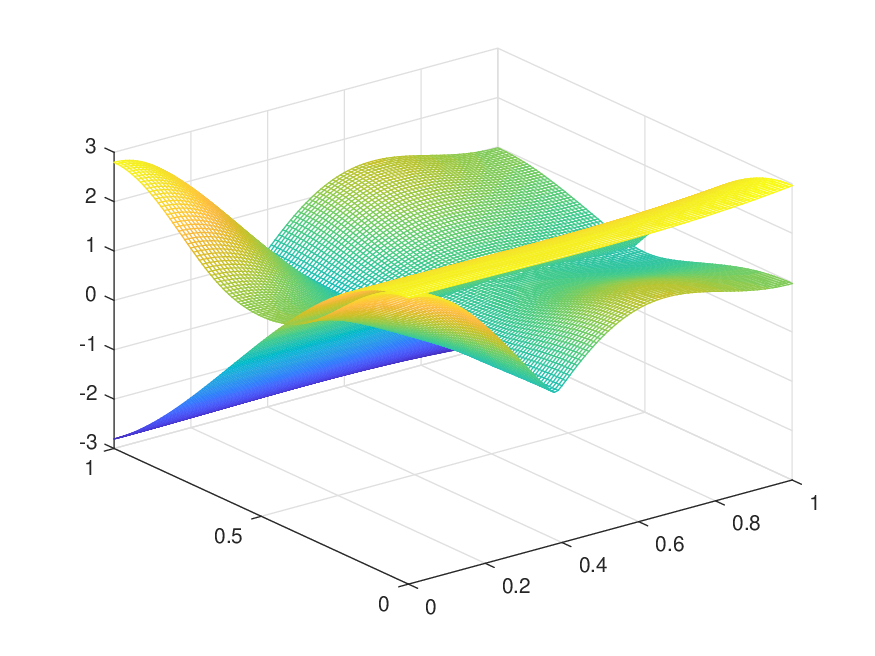}
\caption{Reached state and the state $d_s$ in $\Omega$
for Example~\ref{ex01}.}
\label{fig1}
\end{figure}
\begin{figure}[ht!]
\centering
\includegraphics[scale=0.6]{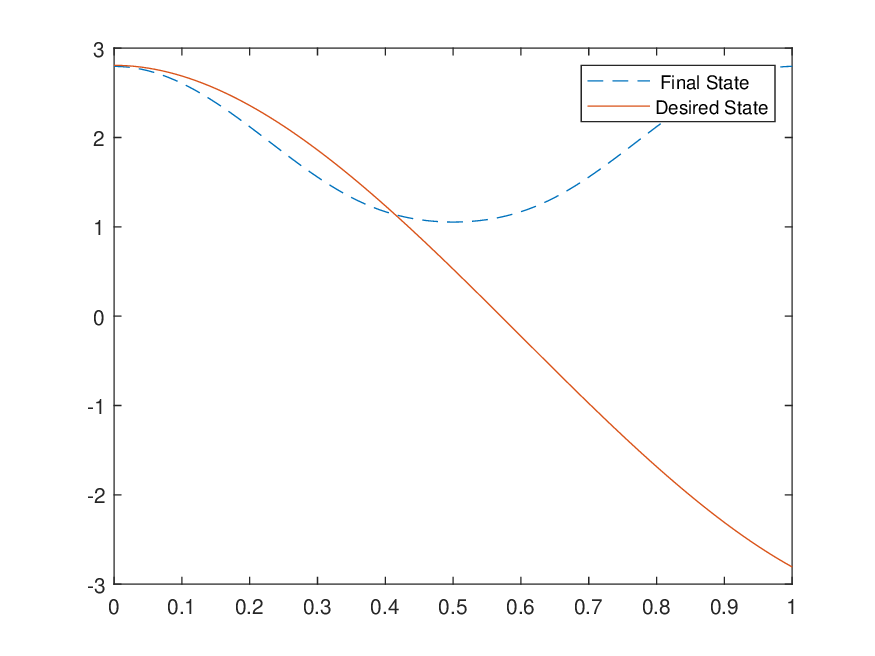}
\caption{Desired state and reached one on $\varGamma$
for Example~\ref{ex01}.}
\label{fig2}
\end{figure}
Figure~\ref{fig1} shows that the state in time $T$ 
is  very close to $d_s$ in the subregion $\omega_c$. 
After the projection on $\mathrm{\varGamma}$, Figure~\ref{fig2} 
shows that the target state is obtained with a reconstruction error 
$1.7\times 10^{-4}$ on $\varGamma$ with cost $||\tilde{u}||_{L^2(0,T)}^2=0.71$.
\end{example}

\begin{example} 
\label{ex02}	
Now we consider the same $\Omega$ as in Example~\ref{ex01}, 
$T=2$, and the following semilinear diffusion system with Caputo
fractional derivative:
\begin{equation}
\label{simulation2}
\begin{cases}
\begin{split}	
^{^{C}}D_{0^+}^{^{0.6}}  z(x,y,t) 
-&\left(\dfrac{\partial^2}{\partial x^2}
+ \dfrac{\partial^2}{\partial y^2}\right) 
z(x,y,t)\\
&=z^2(x,y,t)+\delta_{\{b_1,b_2\}}(x,y)u(t)  
\end{split}
&  \text{ in } \Omega\times\left]0,2\right], \\
\dfrac{\partial z}{\partial\nu_{A}}(\xi,\nu,t) =0 
&\text{ on } \partial\Omega\times\left]0,2\right],\\
z(x,y,0) = 0  &\text{ in } \Omega.
\end{cases}  
\end{equation} 
In this second example, the control operator $B$ is unbounded 
but it satisfies the condition \eqref{condu}. 
In this example, the desired state is 
$z_d(y)=2\left(\dfrac{y^3}{42}-\dfrac{y^2}{1.3}+0.1\right)$. 
Let us consider $\varGamma=\{0\}\times[0,0.3]$, 
$ \omega_c=[0,0.2]\times[0,0.3]$, $b_1=0.48$, $b_2=0.70$, 
$$
d_s(x,y)=20\left(\dfrac{x^3}{4}-\dfrac{x^2}{65}+0.1\right)
\left(\dfrac{y^3}{42}-\dfrac{y^2}{1.3}+0.1\right),
$$  
and $\varepsilon=10^{-3}$. Using Algorithm~\ref{euclid} 
with our data, we obtain the results 
of Figures~\ref{fig3}--\ref{fig5}.
\begin{figure}[ht!]
\centering
\includegraphics[scale=0.6]{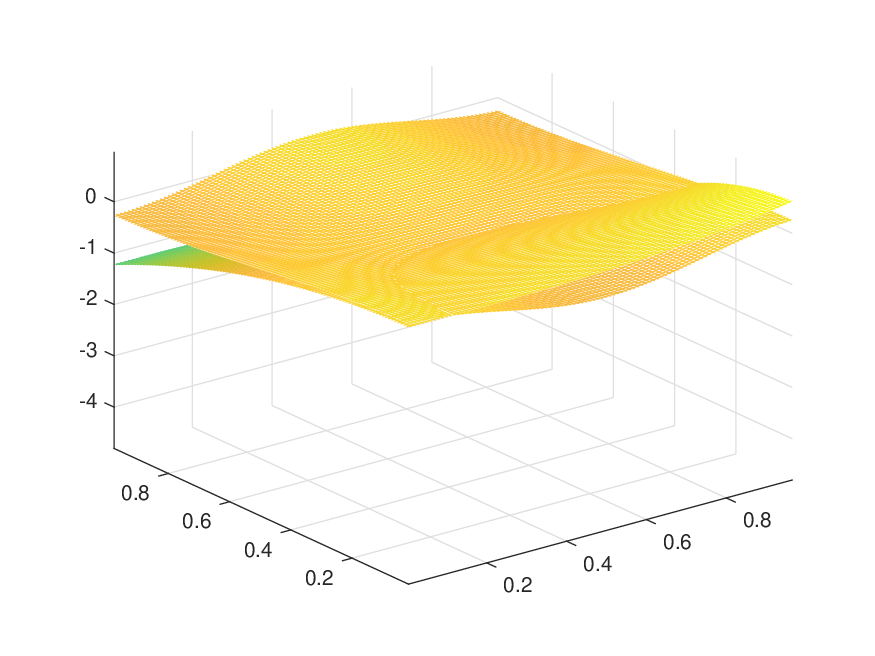}
\caption{Reached state and the state $d_s$ in $\Omega$
for Example~\ref{ex02}.}
\label{fig3}
\end{figure}
\begin{figure}[ht!]
\centering
\includegraphics[scale=0.6]{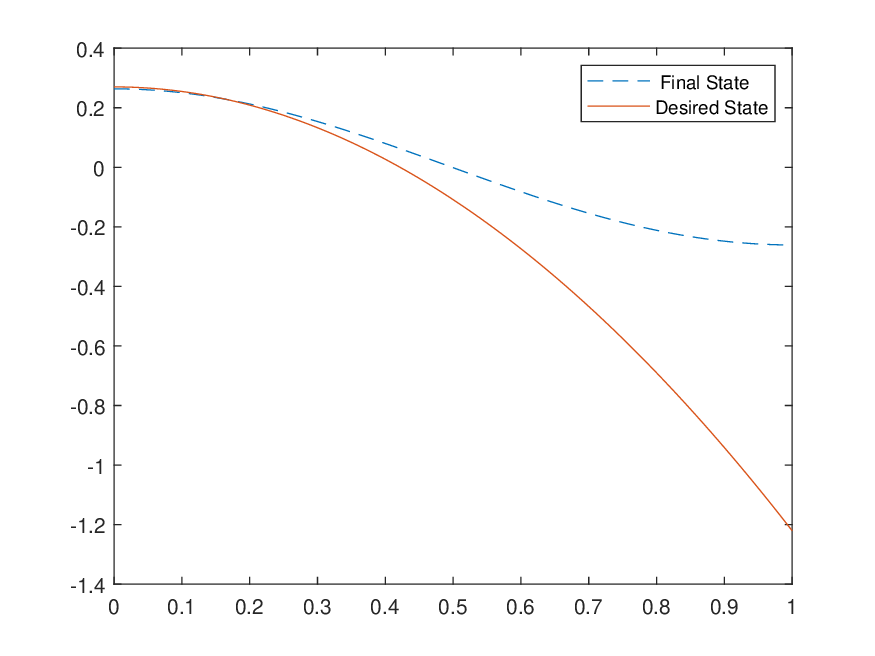}
\caption{Desired state and reached one on $\varGamma$
for Example~\ref{ex02}.}
\label{fig4}
\end{figure}
\begin{figure}[ht!]
\centering
\includegraphics[scale=0.6]{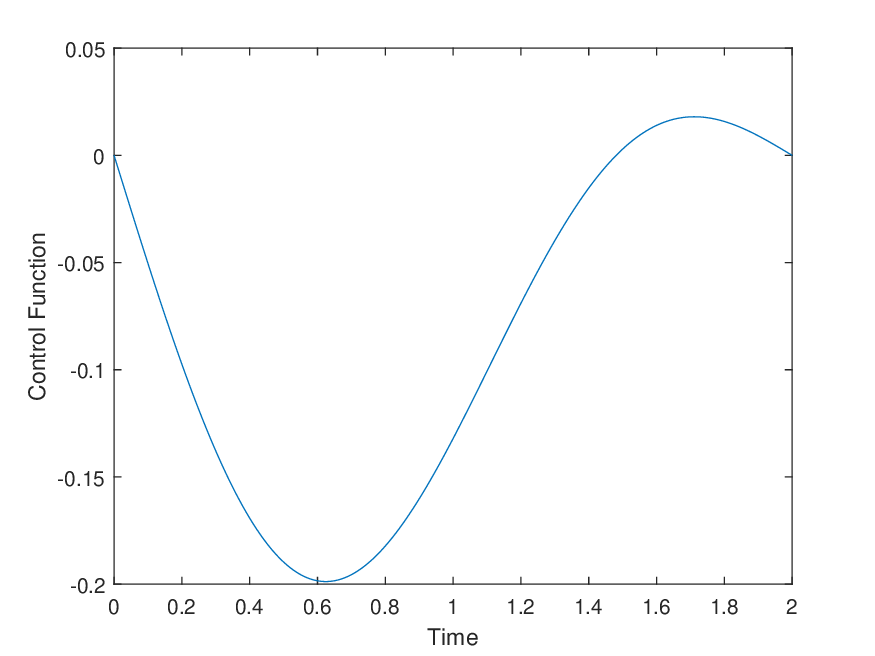}
\caption{Evolution of the control function of Example~\ref{ex02}.}
\label{fig5}
\end{figure}

In Figure~\ref{fig3}, we see that the final state at time $T$ 
is very close to the extension of the target state $d_s$ in $\omega_c$. 
From Figure~\ref{fig4}, we can see that the 
reconstruction error is smaller than  $10^{-3}$.
Figure~\ref{fig5} presents the evolution of the control 
function $\tilde{u}(t)$ with the cost 
$||\tilde{u}||_{L^2(0,T)}^2=2\times 10^{-4}$.
\end{example}


\section{Conclusion}
\label{sec5}

We have proved sufficient conditions for the boundary regional controllability 
of semilinear Caputo fractional systems. More precisely, the  boundary regional 
controllability of a class of semilinear Caputo systems is implied by the 
controllability in a suitable sub-region of the evolution domain. Further, 
the regional controllability problem is transformed into a fixed point problem 
of an appropriate nonlinear operator. Under some conditions in the nonlinear part 
of the system, we have guaranteed the existence of a fixed point of this operator 
and established the regional controllability of the considered system. Finally, 
we have presented two successful numerical simulations to illustrate 
our theoretical study. 

For future work, it would be interesting to investigate the validity 
of the obtained results for other different fractional systems, e.g., 
Hadamard or Caputo--Fabrizio.


\medskip

\noindent\textbf{Acknowledgment.} 
This work was partially supported by CIDMA 
and is funded by the Funda\c{c}\~{a}o para a Ci\^{e}ncia 
e a Tecnologia, I.P. (FCT, Funder ID = 50110000187) 
under Grants UIDB/04106/2020 (\url{https://doi.org/10.54499/UIDB/04106/2020})
and UIDP/04106/2020 (\url{https://doi.org/10.54499/UIDP/04106/2020}). 
Tajani and Torres are also supported through project CoSysM3, 
Reference 2022.03091.PTDC, financially supported 
by national funds (OE) through FCT/MCTES.



\end{document}